\renewcommand{\:}{\colon}
\def\dis{\operatorname{dis}}
\def\diam{\operatorname{diam}}
\title{Gromov–-Hausdorff distance between vertex sets of regular polygons inscribed in a given circle}
\author{Talant Talipov}
\date{}
\begin{document}
\thispagestyle{empty}
\newtheorem{Th}{Theorem}[section]
\newtheorem{Lem}{Lemma}[section]
\newtheorem{Rem}{Remark}[section]
\newtheorem{Cor}{Corollary}[section]
\theoremstyle{definition}
\newtheorem{Ex}{Example}[section]
\newtheorem{Def}{Definition}[section]
\newtheorem{Prep}{Proposition}[section]
\sloppy

\maketitle

\begin{abstract}
We calculate the Gromov--Hausdorff distance between vertex sets of regular polygons endowed with the round metric. We give a full answer for the case of $n$- and $m$-gons with $m$ divisible by $n$. Also, we calculate all distances to $2$-gons and $3$-gons
\end{abstract}

\section{Introduction}
In the present paper we study the class of all metric spaces considered up to isometry, endowed with the Gromov--Hausdorff distance. Note that the exact values of the Gromov--Hausdorff distances between specific metric spaces are known only for a small number of cases. For example, in~\cite{IvaTuzSimpDist}, the Gromov–-Hausdorff distances from a broad class of metric spaces to 1-spaces, i.e. metric spaces with one non-zero distance, were calculated. In ~\cite{YiboTuz}, the Gromov–-Hausdorff distance between a segment and a circle with intrinsic metric was obtained. In ~\cite{Memoli}, the Gromov–-Hausdorff distances between spheres of different dimensions were calculated in some cases and estimated in other ones. Also, the Gromov--Hausdorff distance between the vertex set of regular polygons and circle were calculated, as well as between different regular  $m$- and $(m + 1)$-gons inscribed in the same circle. In the present paper we extend the results of~\cite{Memoli} for the case of $m$- and $n$-gons, provided that $m$ is divisible by $n$. Also we have calculated all distances to $2$-gons and $3$-gons. The author expresses his gratitude to his supervisor, Prof. Alexey A. Tuzhilin, and to Prof. Alexander O. Ivanov for posing the problem and help in the work.

\section{Preliminaries}
\label{Before}
Let $X$ be an arbitrary metric space. The distance between points $x, y$ we denote by $d(x,y)$ or $|xy|$. For any non-empty $A$, $B$ the \textit{Hausdorff distance\/} between $A$ and $B$ is defined as follows$\/\colon$
\begin{equation*}
    d_{H}(A,B) = \max\bigl\{\sup\limits_{a \in A}\inf\limits_{b \in B}d(a,b), \sup\limits_{b \in B}\inf\limits_{a \in A}d(a,b)\bigr\}.
\end{equation*}
Let $X$ and $Y$ be metric spaces. If $X'$ and $Y'$ are subsets of a metric space $Z'$, provided $X'$ is isometric to $X$, and $Y'$ is isometric to $Y$, then $(X', Y', Z')$ is called \textit{realization} of the pair $(X,Y)$. \textit{The Gromov--Hausdorff distance\/} between $X$ and $Y$ is the value
\begin{equation*}
    d_{GH}(X, Y) = \inf\bigl\{r: \exists (X',Y',Z'), d_{H}(X',Y') \leqslant r\bigr\}.
\end{equation*}
\begin{Def}
Given two sets $X$ and $Y$, a \textit{correspondence\/} between $X$ and $Y$ is a subset $R \subset X \times Y$ such that for any $x \in X$ there exists $y \in Y$ with $(x, y) \in R$ and, vise versa, for any $y \in Y$ there exists $x \in X$ with $(x, y) \in R$. If $X$, $Y$ are metric spaces, then we define \textit{the correspondence distortion\/} $R$ as follows$\/\colon$ $\dis R = \sup\Bigl\{\bigl||x_{1}x_ {2}| - |y_{1}y_{2}|\bigr|: (x_{1},y_{1}), (x_{2},y_{2}) \in R\Bigr\}$.
\end{Def}
We denote by $\mathcal{R}(X, Y)$ the set of all correspondences between $X$ and $Y$.
\begin{Th}[\cite{BurBurIva}]
Let $X$ and $Y$ be metric spaces. Then
\begin{equation*}
    d_{GH}(X,Y) = \frac{1}{2}\inf\bigl\{\dis R: R \in \mathcal{R}(X,Y)\bigr\}.
\end{equation*}
\end{Th}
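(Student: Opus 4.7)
The plan is to prove the two inequalities $d_{GH}(X,Y) \leqslant \frac{1}{2}\inf \dis R$ and $d_{GH}(X,Y) \geqslant \frac{1}{2}\inf \dis R$ separately. For the \emph{upper bound} I would start from an arbitrary correspondence and construct an ambient metric space realizing the pair with small Hausdorff distance. For the \emph{lower bound} I would start from a realization and read off a correspondence whose distortion is controlled by twice the Hausdorff distance.

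For the upper bound, fix $R \in \mathcal{R}(X,Y)$ with $\dis R = 2r$. On the disjoint union $X \sqcup Y$ define a function $d$ that agrees with the given metrics on $X$ and $Y$, and for cross-distances put
\[
d(x,y) = \inf\bigl\{|xx'|_{X} + r + |y'y|_{Y} : (x',y') \in R\bigr\}.
\]
One then checks that $d$ is a pseudometric; the only nontrivial axiom is the triangle inequality for triples straddling $X$ and $Y$, and this follows directly from the key estimate $\bigl||x_1'x_2'|_X - |y_1'y_2'|_Y\bigr| \leqslant 2r$ for all $(x_i',y_i') \in R$, which is precisely the definition of $\dis R$. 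Passing to the metric quotient (identifying points at zero $d$-distance) produces a realization of $(X,Y)$; since each $x \in X$ is paired by $R$ with some $y \in Y$ at $d$-distance $r$, and vice versa, the Hausdorff distance in this space is at most $r$, yielding $d_{GH}(X,Y) \leqslant \tfrac{1}{2}\dis R$.

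For the lower bound, given $\varepsilon > 0$ pick a realization $(X',Y',Z')$ of $(X,Y)$ with $d_H(X',Y') < d_{GH}(X,Y) + \varepsilon =: r$. Using the isometric identifications, regard $X$ and $Y$ as subsets of $Z'$ and set
\[
R = \bigl\{(x,y) \in X \times Y : d_{Z'}(x,y) \leqslant r\bigr\}.
\]
The Hausdorff bound guarantees that every point of $X$ has a partner in $Y$ under $R$ and conversely, so $R \in \mathcal{R}(X,Y)$. For any two pairs $(x_1,y_1),(x_2,y_2) \in R$ the triangle inequality applied twice in $Z'$ gives $\bigl||x_1x_2|-|y_1y_2|\bigr| \leqslant d_{Z'}(x_1,y_1) + d_{Z'}(x_2,y_2) \leqslant 2r$, hence $\dis R \leqslant 2r$. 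Letting $\varepsilon \to 0$ finishes this direction.

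The main obstacle I expect is the upper bound: one must construct the ambient metric explicitly and verify that the cross-term formula satisfies the triangle inequality, as well as handle the passage from pseudometric to metric cleanly. The lower bound is essentially a one-step application of the triangle inequality once the correspondence has been defined as the set of sufficiently close pairs inside the realization.
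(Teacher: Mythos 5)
Your proposal is correct, and it reproduces the standard argument: the paper itself gives no proof of this theorem (it is quoted from the cited book of Burago--Burago--Ivanov), where exactly your two-step scheme is used --- gluing $X \sqcup Y$ via the correspondence with cross-distances shifted by $r = \tfrac12\dis R$ for the upper bound, and extracting the correspondence of pairs at distance at most $r$ inside a realization for the lower bound. Both halves of your argument are sound, including the triangle-inequality verification via $\bigl||x_1'x_2'|-|y_1'y_2'|\bigr|\leqslant 2r$ and the passage to the metric quotient when the glued function is only a pseudometric.
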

In~\cite{Memoli} the following construction was considered. For a metric space $(X,d_{X})$, we define the pseudo-ultrametric space $(X,u_{X})$ where $u_{X}\colon X\times X\to \mathbb{R}_{+}$ is defined by
\begin{equation*}
    u_{X}\: (x,y)\mapsto \inf\bigl\{\max\limits_{0\leqslant i \leqslant n-1} d_{X}(x_{i},x_{i+1}):x_{0} = x,...,x_{n} = y\bigr\}.
\end{equation*}
Now, define $\mathbf{U}(X)$ to be the quotient metric space of $(X,u_{X})$ over the equivalence $x \sim y$ if and only if $u_{X}(x,y) = 0$.
\begin{Th}[\cite{Memoli}]
\label{MemoliTh}
For all bounded metric spaces $X$ and $Y$, it holds
\begin{equation*}
    d_{GH}(X,Y) \geqslant d_{GH}\bigl(\mathbf{U}(X), \mathbf{U}(Y)\bigr).
\end{equation*}
\end{Th}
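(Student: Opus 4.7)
The plan is to lift any correspondence between $X$ and $Y$ to a correspondence between $\mathbf{U}(X)$ and $\mathbf{U}(Y)$ without increasing distortion, and then invoke the characterization of $d_{GH}$ via correspondences stated above.

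First, given $R\in\mathcal{R}(X,Y)$, I would define
\begin{equation*}
    \widetilde R=\bigl\{([x],[y])\in \mathbf{U}(X)\times \mathbf{U}(Y) : (x,y)\in R\bigr\},
\end{equation*}
where $[\,\cdot\,]$ denotes the equivalence class under $u_X$-identification, respectively $u_Y$-identification. Because the quotient maps $X\to\mathbf{U}(X)$ and $Y\to\mathbf{U}(Y)$ are surjective and $R$ is a correspondence, $\widetilde R$ is a correspondence between $\mathbf{U}(X)$ and $\mathbf{U}(Y)$.

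The core step is to bound $\dis\widetilde R$. Fix pairs $(x_1,y_1),(x_2,y_2)\in R$, and let $\varepsilon = \dis R$. For an arbitrary chain $x_1=a_0,a_1,\dots,a_n=x_2$ in $X$, I would build a matching chain $y_1=b_0,b_1,\dots,b_n=y_2$ in $Y$ by setting $b_0=y_1$, $b_n=y_2$, and choosing, for each intermediate index $i$, some $b_i\in Y$ with $(a_i,b_i)\in R$ (possible since $R$ is a correspondence). Then $(a_i,b_i),(a_{i+1},b_{i+1})\in R$ gives $d_Y(b_i,b_{i+1})\leqslant d_X(a_i,a_{i+1})+\varepsilon$, hence
\begin{equation*}
    \max_{0\leqslant i\leqslant n-1} d_Y(b_i,b_{i+1}) \;\leqslant\; \max_{0\leqslant i\leqslant n-1} d_X(a_i,a_{i+1}) + \varepsilon.
\end{equation*}
Taking the infimum over chains on the right and noting it dominates $u_Y(y_1,y_2)$ on the left yields $u_Y(y_1,y_2)\leqslant u_X(x_1,x_2)+\varepsilon$. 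The symmetric argument gives $\bigl|u_X(x_1,x_2)-u_Y(y_1,y_2)\bigr|\leqslant \dis R$, so $\dis\widetilde R\leqslant\dis R$.

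Taking the infimum over all $R\in\mathcal{R}(X,Y)$ and applying the correspondence formula for $d_{GH}$ to both sides gives the claimed inequality. I do not anticipate a genuine obstacle here; the only point that requires a little care is the chain-matching construction, because the intermediate points $a_i$ need not belong to either $\{x_1,x_2\}$, so one must explicitly invoke the correspondence property of $R$ to select the $b_i$'s, and ensure the endpoints $b_0,b_n$ are pinned to $y_1,y_2$ to keep the resulting bound expressed in terms of $u_Y(y_1,y_2)$.
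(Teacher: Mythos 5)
Your proof is correct. The paper does not prove this statement at all---it is imported from \cite{Memoli}---and your argument (lifting $R$ to $\widetilde R$ on the quotients and showing $u_Y(y_1,y_2)\leqslant u_X(x_1,x_2)+\dis R$ by matching chains through the correspondence, whence $\dis\widetilde R\leqslant\dis R$ and the inequality follows from the correspondence formula for $d_{GH}$) is exactly the standard proof that $\mathbf{U}$ is $1$-Lipschitz with respect to the Gromov--Hausdorff distance, as in the cited reference.
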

\begin{Def}
By \textit{simplex\/} we call a metric space, all whose non-zero distances equal to each other. If $m$ is an arbitrary cardinal number, then by $\lambda \Delta_{m}$ we denote a simplex containing $m$ points and such that all its non-zero distances equal to $\lambda$.
\end{Def}
Let $X$ be an arbitrary set consisting of more than one point, $2 \leqslant m \leqslant \#X$ a cardinal number,
and $\lambda > 0$. By $\mathcal{D}_{m}(X)$ we denote the family of all possible partitions of the set X into m
non-empty subsets.  For any non-empty $A, B \subset X$, we put $|AB| = \inf\bigl\{|ab|: a \in A, b \in B\bigr\}$. Now let $X$ be a metric space. Then for each $D = \{X_{i}\}_{i \in I} \in \mathcal{D}_{m}(X)$ we put
\begin{equation*}
    \diam D = \sup\limits_{i \in I} \diam X_{i}\/;
\end{equation*}
\begin{equation*}
    \alpha(D) = \inf\bigl\{|X_{i}X_{j}|: i \neq j\bigr\}.
\end{equation*}
\begin{Th}[\cite{IvaTuzSimpDist}]
\label{SimplexTh}
Let $X \neq \Delta_{1}$ be an arbitrary metric space, $2 \leqslant m \leqslant \#X$ a cardinal
number, and $\lambda > 0$. Then
\begin{equation*}
    2d_{GH}(\lambda\Delta, X) = \inf\limits_{D \in \mathcal{D}_{m}(X)} \max\bigl\{\diam D, \lambda - \alpha(D), \diam X - \lambda\bigr\}.
\end{equation*}
\end{Th}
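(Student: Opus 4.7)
The strategy is to use Theorem 1 to reduce the Gromov–Hausdorff distance to an optimization over correspondences, then show that the optimal correspondences between the simplex $\lambda\Delta_m = \{s_1,\ldots,s_m\}$ and $X$ are, up to passing to a subcorrespondence, exactly those of the form $R_D = \bigcup_{i=1}^{m} \{s_i\}\times X_i$ coming from partitions $D = \{X_i\}_{i=1}^{m} \in \mathcal{D}_m(X)$.

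\textbf{Upper bound.} First I would fix an arbitrary partition $D \in \mathcal{D}_m(X)$, form the correspondence $R_D$, and bound $\dis R_D$ directly. Any two pairs $(s_i,x),(s_j,y) \in R_D$ contribute $\bigl||s_is_j|-|xy|\bigr|$; splitting on whether $i=j$ gives a contribution of at most $\diam X_i \leqslant \diam D$ in the first case, and $|\lambda - |xy||$ in the second, which is bounded by $\max\{\lambda - \alpha(D),\, \diam X - \lambda\}$ since $|xy|$ lies in $[\alpha(D), \diam X]$ when $x,y$ come from distinct parts. This shows $\dis R_D \leqslant \max\{\diam D, \lambda-\alpha(D), \diam X - \lambda\}$, and taking infima proves the ``$\leqslant$" direction.

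\textbf{Lower bound.} For the reverse inequality, start from an arbitrary $R \in \mathcal{R}(\lambda\Delta_m, X)$ and extract a partition with no larger distortion. For each $x \in X$ choose some index $\phi(x)$ with $(s_{\phi(x)}, x) \in R$, and set $X_i = \phi^{-1}(i)$. One must ensure that every $X_i$ is non-empty; using that $R$ is a correspondence (so each $s_i$ has at least one $R$-partner) together with $m \leqslant \#X$, the $\phi$ can be chosen to be surjective, possibly after swapping assignments along an augmenting path. For this partition $D$: pairs $(s_i,x),(s_i,y)$ lying in $R$ force $|xy| \leqslant \dis R$, hence $\diam D \leqslant \dis R$; pairs $(s_i,x),(s_j,y)$ with $i \neq j$ force $|xy| \geqslant \lambda - \dis R$, hence $\alpha(D) \geqslant \lambda - \dis R$, i.e. $\lambda - \alpha(D) \leqslant \dis R$; and choosing a pair realizing (up to $\varepsilon$) the diameter of $X$ and examining both the $i=j$ and $i\neq j$ subcases inside $R$ yields $\diam X - \lambda \leqslant \dis R$ (the $i=j$ subcase gives the even stronger $\diam X \leqslant \dis R$). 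Combining, $\max\{\diam D, \lambda-\alpha(D), \diam X - \lambda\} \leqslant \dis R$, and infimizing over $R$ closes the argument via Theorem 1.

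\textbf{Main obstacle.} The only non-routine step is the surjectivity of the assignment $\phi$: starting from $R$, one needs a partition into exactly $m$ non-empty classes compatible with $R$, which is a kind of system of distinct representatives problem for the family $\{N(s_i)\}_{i=1}^{m}$, $N(s_i) := \{x : (s_i,x) \in R\}$. I would handle this by a Hall-type argument: either such an SDR exists (and we obtain the desired partition directly), or some subfamily fails Hall's condition, in which case two distinct $s_i,s_j$ share a common $R$-partner $x$, forcing $\dis R \geqslant \lambda$ and allowing one to coarsen $R$ into a valid partition-correspondence with the needed distortion bound. The rest of the proof is essentially the bookkeeping on $\diam D$, $\alpha(D)$, and $\diam X$ described above.
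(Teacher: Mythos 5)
The paper itself does not prove this theorem; it is imported from~\cite{IvaTuzSimpDist}, so your argument can only be judged on its own terms. Your upper bound is correct and standard: for a partition $D=\{X_i\}$ the correspondence $R_D=\bigcup_i\{s_i\}\times X_i$ satisfies $\dis R_D\leqslant\max\{\diam D,\lambda-\alpha(D),\diam X-\lambda\}$, exactly as you compute. Your plan for the lower bound (from an arbitrary $R$ extract a partition subordinate to the sets $N(s_i)=\{x:(s_i,x)\in R\}$ and read off the three estimates, which you do correctly) is also the right one, and you correctly identify the one non-routine point: arranging that all $m$ classes are non-empty.

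The gap is in how you close that point. First, the dichotomy ``either an SDR exists or Hall's condition fails'' is only valid for finite $m$, whereas the statement allows an arbitrary cardinal $2\leqslant m\leqslant\#X$; for infinite families Hall's condition does not imply an SDR. Second, and more seriously, in the failure case you only derive $\dis R\geqslant\lambda$ and then assert that one can ``coarsen $R$ into a valid partition-correspondence with the needed distortion bound'' --- but this is precisely the step requiring an argument, and coarsening (merging or enlarging classes) does not by itself yield $m$ non-empty classes with controlled diameter. The case can be finished as follows: each $N(s_i)$ has $\diam N(s_i)\leqslant\dis R$ and the $N(s_i)$ cover $X$; disjointify this cover and, using $\#X\geqslant m$, split classes (splitting off points never increases diameters) until there are exactly $m$ non-empty classes, obtaining $D$ with $\diam D\leqslant\dis R$. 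Since in this case $\lambda\leqslant\dis R$, we get $\lambda-\alpha(D)\leqslant\lambda\leqslant\dis R$, and your generic estimate gives $\diam X-\lambda\leqslant\dis R$, so $\max\{\diam D,\lambda-\alpha(D),\diam X-\lambda\}\leqslant\dis R$ here as well. Note that with this construction Hall's theorem becomes unnecessary: either the sets $N(s_i)$ are pairwise disjoint, in which case they already form the required partition, or two of them share a point, which forces $\dis R\geqslant\lambda$ and the argument above applies; this simpler dichotomy also handles infinite $m$, where your Hall-based route breaks down.
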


\section{Gromov--Hausdorff distance between vertex sets of regular polygons inscribed in a given circle}
For each integer $n \geqslant 2$, let $P_{n}$ be the set of vertices of a regular $n$-gon inscribed in the unit circle $S^1$. Notice that $P_{2}$ is the pair of diametrically opposed points. Furthermore, we endow $P_{n}$ with the restriction of the geodesic distance on $S^1$. For $m,n \geqslant 2$, we define the values $p_{m,n} = d_{GH}(P_{m}, P_{n})$.
\begin{Prep}[\cite{Memoli}]
\label{MemoliPrep}
For all $m \geqslant 2$, we have
\begin{equation*}
    d_{GH}(S^1, P_{m}) = \frac{\pi}{m};
\end{equation*}
\begin{equation*}
    d_{GH}(P_{m}, P_{m+1}) = \frac{\pi}{m + 1}.
\end{equation*}
\end{Prep}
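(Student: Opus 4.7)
The plan is to pair explicit correspondences (for upper bounds) with reductions to simplices via Theorem \ref{MemoliTh} (for lower bounds), treating the two identities in parallel.

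\emph{Lower bounds.} First compute the ultrametric quotients. Since $S^1$ is path-connected and admits chains with arbitrarily small steps, $\mathbf{U}(S^1) = \Delta_1$. For $P_n$, the minimum positive distance $2\pi/n$ is realized between consecutive vertices and any pair of vertices can be chained through consecutive ones, so $u_{P_n}(p,q) = 2\pi/n$ for $p \neq q$, hence $\mathbf{U}(P_n) = (2\pi/n)\Delta_n$. Theorem \ref{MemoliTh} then yields $d_{GH}(S^1, P_m) \geqslant d_{GH}(\Delta_1, (2\pi/m)\Delta_m) = \pi/m$, the last equality because the unique correspondence from $\Delta_1$ to $\lambda\Delta_m$ has distortion $\lambda$. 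For the second identity, with $\lambda = 2\pi/m$ and $\mu = 2\pi/(m+1)$, Theorem \ref{MemoliTh} reduces the question to $d_{GH}(\lambda\Delta_m, \mu\Delta_{m+1})$, which Theorem \ref{SimplexTh} applied to $X = \mu\Delta_{m+1}$ evaluates: every $D \in \mathcal{D}_m(\mu\Delta_{m+1})$ has a two-point block, so $\diam D = \alpha(D) = \mu$, and since $\mu - \lambda < 0 < \lambda - \mu = \mu/m \leqslant \mu$, one gets $\max\{\diam D, \lambda - \alpha(D), \mu - \lambda\} = \mu$, whence $d_{GH}(P_m, P_{m+1}) \geqslant \pi/(m+1)$.

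\emph{First upper bound.} Use the nearest-vertex correspondence $R_1 = \{(x, p(x)) : x \in S^1\}$, where $p(x) \in P_m$ is closest to $x$. Since $d_{S^1}(x, p(x)) \leqslant \pi/m$, the triangle inequality yields $|d_{S^1}(x_1, x_2) - d_{S^1}(p(x_1), p(x_2))| \leqslant 2\pi/m$, so $\dis R_1 \leqslant 2\pi/m$ and $d_{GH}(S^1, P_m) \leqslant \pi/m$.

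\emph{Second upper bound.} Let $p_i, q_j$ denote the vertices of $P_m, P_{m+1}$ at angles $2\pi i/m$ and $2\pi j/(m+1)$. Take the correspondence $R_2 = \{(p_i, q_i) : 0 \leqslant i \leqslant m-1\} \cup \{(p_{m-1}, q_m)\}$, which collapses the pair $q_{m-1}, q_m$ onto the single vertex $p_{m-1}$ while matching $q_i$ with $p_i$ for $i < m-1$. Bounding $\dis R_2 \leqslant 2\pi/(m+1)$ reduces to case analysis on the index gap $k = |i - j|$ using the piecewise-linear expressions $d_{P_m}(p_i, p_j) = (2\pi/m)\min(k, m-k)$ and $d_{P_{m+1}}(q_i, q_j) = (2\pi/(m+1))\min(k, m+1-k)$; the extremal pair $(p_{m-1}, q_{m-1}), (p_{m-1}, q_m)$ achieves exactly $2\pi/(m+1)$, and routine comparison of slopes shows no other pair exceeds this. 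The main obstacle is organizing this case analysis cleanly; every other step is a direct application of Theorems \ref{MemoliTh} and \ref{SimplexTh}.
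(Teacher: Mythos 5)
The paper itself never proves Proposition~\ref{MemoliPrep}: it is imported from \cite{Memoli} and used as a black box, so there is no in-paper argument to compare yours against. Your proof is correct and self-contained, and it runs on exactly the machinery the paper does use elsewhere: the identifications $\mathbf{U}(S^1)=\Delta_1$ and $\mathbf{U}(P_n)=\frac{2\pi}{n}\Delta_n$ followed by Theorem~\ref{MemoliTh} and Theorem~\ref{SimplexTh} are the same lower-bound mechanism the author deploys in the proof of Theorem~\ref{MainTh} (and your simplex computation $\max\{\mu,\lambda-\mu,\mu-\lambda\}=\mu$ with $\lambda=\frac{2\pi}{m}$, $\mu=\frac{2\pi}{m+1}$ is right), while your upper bounds via explicit correspondences (nearest-vertex projection for $S^1$ versus $P_m$, and the correspondence collapsing $q_{m-1},q_m$ onto a single vertex of $P_m$) are in the same style as the correspondence constructed there. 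The only place you gesture rather than prove is the estimate $\dis R_2\leqslant\frac{2\pi}{m+1}$: note that ``case analysis on the index gap $k=|i-j|$'' does not literally cover the mixed pairs $(p_j,q_j)$ against $(p_{m-1},q_m)$, where the index gaps in the two polygons differ by one; for these one checks $\bigl|\frac{2\pi}{m}\min(a,m-a)-\frac{2\pi}{m+1}\min(a,m+1-a)\bigr|\leqslant\frac{2\pi}{m+1}$ with $a=j+1$, which does hold, the extreme value $\frac{2\pi}{m+1}$ being attained exactly at the collapsed pair $q_{m-1},q_m$. So the omission is a routine verification left implicit, not a genuine gap; writing out that short computation (in the spirit of Lemma~\ref{MainLemma}) would make the argument complete.
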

The next technical result will be used in what follows.
\begin{Lem}
\label{MainLemma}
Let $n \geqslant 2$, $p \in \mathbb{N}$. Then for any $i, j = 1, 2, ..., n$ and $k,l = 1, 2, ..., p - 1$,
\begin{equation*}
	\Bigl|\min\bigl(|i - j|, n - |i - j|\bigr) - \min\bigl(\bigl|i - j + \frac{k - l}{p}\bigr|, n - \bigl|i - j + \frac{k - l}{p}\bigr|\bigr)\Bigr| \leqslant \frac{|k - l|}{p}.
\end{equation*}
\begin{proof}
Put $S = \Bigl|\min\bigl(|i - j|, n - |i - j|\bigr) - \min\bigl(\bigl|i - j + \frac{k - l}{p}\bigr|, n - \bigl|i - j + \frac{k - l}{p}\bigr|\bigr)\Bigr|$. Since $\frac{|k - l|}{p} < 1$, we have
\begin{equation*}
    |i - j + \frac{k - l}{p}| = 
	\begin{cases}
    i - j + \frac{k - l}{p}, & \text{if}\ i - j > 0;\\
    \frac{|k - l|}{p}, & \text{if}\ i - j = 0;\\
    j - i + \frac{l - k}{p}, & \text{if}\ i - j < 0.
    \end{cases}
\end{equation*}
If $i - j = 0$ then
\begin{equation*}
    S = \Bigl|\min\bigl(0, n\bigr) - \min\bigl(\frac{|k - l|}{p}, n - \frac{|k - l|}{p}\bigr)\Bigr| = \frac{|k - l|}{p}.
\end{equation*}
Without loss of generality, we will assume that $i - j > 0$. Let us consider a few cases. \par
1) Assume that $i - j < \frac{n}{2}$. If $i - j + \frac{k - l}{p} > \frac{n}{2}$ then $i - j = \frac{n - 1}{2}$ and $\frac{k - l}{p} > \frac{1}{2}$. In this case,
\begin{equation*}
    S = \Bigl|\frac{n - 1}{2} - \bigl(n - (\frac{n - 1}{2} + \frac{k - l}{p})\bigr)\Bigr| = \Bigl|\frac{k - l}{p} - 1\Bigr| < \frac{|k - l|}{p},
\end{equation*}
where the last inequality holds because $\frac{k - l}{p} > \frac{1}{2}$. If $i - j + \frac{k - l}{p} \leqslant \frac{n}{2}$ then
\begin{equation*}
    S = \Bigl|i - j - \bigr(i - j + \frac{k - l}{p}\bigr)\Bigr| = \frac{|k - l|}{p}.
\end{equation*} \par
2) Assume that $i - j = \frac{n}{2}$. Then
\begin{equation*}
    S = \Bigl|\frac{n}{2} - \min\bigl(\frac{n}{2} + \frac{k - l}{p}, \frac{n}{2} + \frac{l - k}{p}\bigr)\Bigr| = \frac{|k - l|}{p}.
\end{equation*} \par
3) Assume that $i - j > \frac{n}{2}$. If $i - j + \frac{k - l}{p} < \frac{n}{2}$ then $i - j = \frac{n + 1}{2}$ and $\frac{l - k}{p} > \frac{1}{2}$. In this case,
\begin{equation*}
    S = \Bigl|\frac{n - 1}{2} - \bigl(\frac{n + 1}{2} + \frac{k - l}{p}\bigr)\Bigr| = \Bigl|\frac{l - k}{p} - 1\Bigr| < \frac{|k - l|}{p},
\end{equation*}
where the last inequality holds because $\frac{l - k}{p} > \frac{1}{2}$. If $i - j + \frac{k - l}{p} \geqslant \frac{n}{2}$ then
\begin{equation*}
    S = \Bigl|n - i + j - \bigr(n - i + j - \frac{k - l}{p}\bigr)\Bigr| = \frac{|k - l|}{p}.
\end{equation*}
The proof is completed.
\end{proof}
\end{Lem}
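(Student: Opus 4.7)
The quantity on the left is exactly $|\rho(a)-\rho(a')|$, where $\rho\colon[-n,n]\to\mathbb{R}$ is defined by $\rho(x)=\min(|x|,\,n-|x|)$ and
\[
a=i-j,\qquad a'=i-j+\tfrac{k-l}{p}.
\]
Since $|a-a'|=|k-l|/p$, the lemma is nothing but the assertion that $\rho$ is $1$-Lipschitz on the relevant set. So my strategy is to establish this Lipschitz estimate once and apply it.

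First I would check the domain. For $i,j\in\{1,\dots,n\}$ one has $a\in\{-(n-1),\dots,n-1\}$, and for $k,l\in\{1,\dots,p-1\}$ one has $|a'-a|=|k-l|/p<1$; hence both $a,a'\in(-n,n)$. Next I would verify that $\rho$ is $1$-Lipschitz on $[-n,n]$. The slickest way is to write
\[
\rho(x)=\frac{n}{2}-\Bigl||x|-\frac{n}{2}\Bigr|,
\]
which is a composition of $1$-Lipschitz maps. Equivalently, on each of the four subintervals $[-n,-n/2]$, $[-n/2,0]$, $[0,n/2]$, $[n/2,n]$ the function $\rho$ is affine with slope $\pm 1$ and it is continuous at the breakpoints, which gives the Lipschitz bound directly. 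Combining,
\[
S=|\rho(a)-\rho(a')|\leqslant |a-a'|=\frac{|k-l|}{p},
\]
as required.

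There is essentially no obstacle; the geometric content is just that $\rho$ is the pull-back of the intrinsic metric on a circle of circumference $n$, which is obviously $1$-Lipschitz. Should one prefer a proof matching the elementary style of the paper, the same argument can be unrolled by cases: by the symmetry $(i,k)\leftrightarrow(j,l)$ one reduces to $i-j\geqslant 0$; then one splits on whether $i-j$ and $i-j+(k-l)/p$ lie to the same side of $n/2$. In the ``same side'' cases one gets equality $S=|k-l|/p$ immediately, and in the ``straddling'' case the inequality $|k-l|/p>1/2$ is forced and yields the required slack. The Lipschitz viewpoint above simply collapses this bookkeeping into one line.
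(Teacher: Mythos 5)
Your proof is correct, and it takes a genuinely different route from the paper. The paper proves the lemma by explicit case analysis: it splits on the sign of $i-j$, then on whether $i-j$ and $i-j+\frac{k-l}{p}$ lie on the same side of $\frac{n}{2}$, computing $S$ exactly in each case and using the forced inequality $\frac{|k-l|}{p}>\frac12$ in the two ``straddling'' cases --- essentially the unrolled version you sketch in your last paragraph. Your main argument instead identifies the left-hand side as $|\rho(a)-\rho(a')|$ for $\rho(x)=\min\bigl(|x|,n-|x|\bigr)=\frac{n}{2}-\bigl||x|-\frac{n}{2}\bigr|$ and $|a-a'|=\frac{|k-l|}{p}$, and concludes from the fact that $\rho$ is a composition of $1$-Lipschitz maps. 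This is cleaner and slightly more general: the identity $\rho(x)=\frac{n}{2}-\bigl||x|-\frac{n}{2}\bigr|$ holds for all real $x$, so the Lipschitz bound needs no restriction to $[-n,n]$ at all (your domain check, while harmless, is not even necessary), and no integrality of $i-j$ or bound $\frac{|k-l|}{p}<1$ is used. What the paper's casework buys in exchange is only the explicit exact values of $S$ in each configuration, which are not needed elsewhere; your one-line Lipschitz argument, reflecting that $\rho$ is the intrinsic circle distance pulled back to the line, would serve the application in Theorem~\ref{MainTh} equally well.
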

Now let us formulate the main results of this paper.
\begin{Th}
\label{MainTh}
Let $2 \leqslant n \leqslant m$ and $m$ is divisible by $n$. Then
\begin{equation*}
    p_{n,m} = \frac{\pi}{n} - \frac{\pi}{m}.
\end{equation*}
\end{Th}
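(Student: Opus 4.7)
The plan is to establish matching bounds $p_{n,m} \leq \pi/n - \pi/m$ (via an explicit correspondence exploiting the divisibility $m = np$) and $p_{n,m} \geq \pi/n - \pi/m$ (via Theorems~\ref{MemoliTh} and~\ref{SimplexTh} applied to the ultrametric quotients of $P_n$ and $P_m$).

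For the upper bound, I would label the vertices so that each $v_i \in P_n$ sits at the same angular position as the leading vertex of the $i$-th block of $p$ consecutive vertices $w_{(i-1)p+1}, \ldots, w_{ip}$ of $P_m$, and define the correspondence $R = \bigl\{(v_i, w_{(i-1)p + k}) : 1 \leq i \leq n,\ 1 \leq k \leq p\bigr\}$. For any two pairs in $R$, factoring $p$ out of the $P_m$-distance expression rewrites it as $\frac{2\pi}{n}\min\bigl(|i-j + (k-l)/p|,\, n - |i-j + (k-l)/p|\bigr)$, while $|v_i v_j| = \frac{2\pi}{n}\min(|i-j|, n - |i-j|)$. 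Lemma~\ref{MainLemma} then bounds the absolute difference term by term by $\frac{2\pi}{n}\cdot\frac{|k-l|}{p} \leq \frac{2\pi(p-1)}{np} = 2(\pi/n - \pi/m)$, so $\dis R \leq 2(\pi/n - \pi/m)$ and the upper bound follows.

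For the lower bound, I would first observe that $\mathbf{U}(P_n) = \frac{2\pi}{n}\Delta_n$ and $\mathbf{U}(P_m) = \frac{2\pi}{m}\Delta_m$: consecutive vertices of $P_n$ are at distance $\frac{2\pi}{n}$, any two distinct vertices can be linked by such a chain of neighbours, and no shorter single step exists in $P_n$, so $u_{P_n} \equiv \frac{2\pi}{n}$ off the diagonal; analogously for $P_m$. By Theorem~\ref{MemoliTh} we get $p_{n,m} \geq d_{GH}\bigl(\frac{2\pi}{n}\Delta_n,\, \frac{2\pi}{m}\Delta_m\bigr)$. I would then apply Theorem~\ref{SimplexTh} with $\lambda = \frac{2\pi}{n}$ and $X = \frac{2\pi}{m}\Delta_m$: for $p \geq 2$, any partition $D \in \mathcal{D}_n(X)$ of $m$ points into $n$ non-empty blocks must contain some block of size $\geq 2$, so $\diam D = \alpha(D) = \frac{2\pi}{m}$, and the infimum collapses to $\max\bigl\{\frac{2\pi}{m},\, \frac{2\pi}{n} - \frac{2\pi}{m}\bigr\} = \frac{2\pi}{n} - \frac{2\pi}{m}$ (the second term dominates because $m = np \geq 2n$). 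This yields $d_{GH}\bigl(\frac{2\pi}{n}\Delta_n, \frac{2\pi}{m}\Delta_m\bigr) = \pi/n - \pi/m$. The edge case $p = 1$ is trivial.

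The substantive step will be the upper bound: the bookkeeping required to match each pair of indices $(i,k), (j,l)$ to the hypotheses of Lemma~\ref{MainLemma}, and in particular the algebraic rewriting that turns a $P_m$-distance into the $\frac{2\pi}{n}$-scaled expression appearing in the lemma, is where the real content lies. The lower bound, by contrast, proceeds almost mechanically once the ultrametric quotients are recognized as rescaled simplexes and the simplex-to-space formula of Theorem~\ref{SimplexTh} is invoked.
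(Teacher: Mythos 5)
Your proposal is correct and follows essentially the same route as the paper: the upper bound via the block correspondence matching each vertex of $P_n$ to $p$ consecutive vertices of $P_m$ and invoking Lemma~\ref{MainLemma}, and the lower bound via Theorem~\ref{MemoliTh} applied to the ultrametric quotients $\frac{2\pi}{n}\Delta_n$, $\frac{2\pi}{m}\Delta_m$ together with Theorem~\ref{SimplexTh}. Your added detail on why the quotients are simplexes and which term of the max dominates is a welcome elaboration of what the paper leaves implicit.
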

\begin{proof}
Let $u_{1},...,u_{n}$ be the vertices of $P_{n}$ and $v_{1}, ..., v_{m}$ be the vertices of $P_{m}$. Let us prove that $p_{n,m} \geqslant \frac{\pi}{n} - \frac{\pi}{m}$. By Theorem~\ref{MemoliTh},
\begin{equation*}
    p_{n,m} \geqslant d_{GH}\bigl(\mathbf{U}(P_{m}), \mathbf{U}(P_{n})\bigr).
\end{equation*}
Notice that $\mathbf{U}(P_{m}), \mathbf{U}(P_{n})$ are simpexes with distances $\frac{2\pi}{m}$ и $\frac{2\pi}{n}$ containing $m$ and $n$ points respectively. By Theorem~\ref{SimplexTh},
\begin{equation*}
    p_{n,m} \geqslant d_{GH}\bigl(\mathbf{U}(P_{m}), \mathbf{U}(P_{n})\bigr) = \frac{1}{2} \max\Bigl\{\frac{2\pi}{m}, \frac{2\pi}{n} - \frac{2\pi}{m}\Bigr\} \geqslant \frac{\pi}{n} - \frac{\pi}{m}.
\end{equation*}
Let us prove the upper bound. Let $m = pn$, where $p \in \mathbb{N}_{+}$. Then $\frac{\pi}{n} - \frac{\pi}{m} = \frac{(p - 1)\pi}{pn}$. Now, we construct the following correspondence $R \in \mathcal{R}(P_{n}, P_{m})$ with $\dis R \leqslant \frac{2(p - 1)\pi}{pn}$, what completes the proof. Namely, let us put
\begin{equation*}
    R = \bigcup\limits_{i=1}^{n} \bigl\{(u_{i},v_{pi - k}): k = 0, 1,..., p - 1)\bigr\}.
\end{equation*}
Then, in accordance with Lemma~\ref{MainLemma}, for any $i, j = 1, 2, ..., n$ and $k, l = 0, 1, ..., p - 1$,
\begin{equation*}
    \begin{aligned}
    \bigl|d(u_{i}, u_{j}) - d(v_{pi - l}, v_{pj - k})\bigr| = \Bigl|\frac{2\pi}{n}\min\bigl(|i - j|, n - |i - j|\bigr) - \frac{2\pi}{pn}\min\bigl(|pi - pj + k - l|, pn - \\
    |pi - pj + k - l|\bigr)\Bigr| \leqslant \frac{2\pi |k - l|}{pn} \leqslant \frac{2(p - 1)\pi}{pn}.
    \end{aligned}
\end{equation*}
Hence, $\dis R \leqslant \frac{2(p - 1)\pi}{pn}$, which is what was required.
\end{proof}
\begin{Th}
Let $m \geqslant 2$. Then
\begin{equation*}
    p_{2,m} = 
    \begin{cases}
    \frac{\pi}{2} - \frac{\pi}{2m}, & \text{if}\ m \text{ is odd;} \\
    \frac{\pi}{2} - \frac{\pi}{m}, & \text{if}\ m \text{ is even.}
    \end{cases}
\end{equation*}
\end{Th}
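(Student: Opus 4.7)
The even case is immediate: if $m$ is even then $2 \mid m$, so Theorem~\ref{MainTh} applied with $n = 2$ gives $p_{2,m} = \pi/2 - \pi/m$ at once. For the odd case, I plan to invoke Theorem~\ref{SimplexTh} after observing that $P_2$ coincides with the simplex $\pi\Delta_2$, since its only nonzero distance is $\pi$. Applying the theorem with $\lambda = \pi$, partition cardinality $2$, and $X = P_m$, and noting that $\diam P_m = (m-1)\pi/m < \pi$ for odd $m$ (so the term $\diam P_m - \lambda$ is negative and drops out of the max), the problem reduces to showing
\[
\inf_{D \in \mathcal{D}_2(P_m)} \max\bigl\{\diam D,\ \pi - \alpha(D)\bigr\} = \pi - \pi/m.
\]

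For the upper bound I will exhibit the balanced arc partition $D^* = \{X_1, X_2\}$ with $X_1 = \{v_1, \ldots, v_{(m-1)/2}\}$ and $X_2 = \{v_{(m+1)/2}, \ldots, v_m\}$. A direct computation gives $\diam X_1 = \pi - 3\pi/m$, $\diam X_2 = \pi - \pi/m$, and $\alpha(D^*) = 2\pi/m$, so the relevant maximum equals $\pi - \pi/m$.

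For the lower bound, I first note that any 2-partition $D$ of $P_m$ must separate some consecutive pair on the circle, whence $\alpha(D) = 2\pi/m$ and the bound $\pi - \alpha(D) = \pi - 2\pi/m$ alone falls short of the target. The crux is therefore to show $\diam D \geq \pi - \pi/m$ by producing a monochromatic diameter pair $v_i, v_j$, i.e.\ one with $j \equiv i + (m-1)/2 \pmod m$. Since $m$ is odd, $\gcd((m-1)/2, m) = 1$, so the shift $i \mapsto i + (m-1)/2 \pmod m$ is a single $m$-cycle on the index set whose consecutive terms are diameter pairs; a partition avoiding any monochromatic diameter pair would be a proper $2$-coloring of an odd cycle, which is impossible. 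This combinatorial observation is the main (and only really nontrivial) obstacle; once established, combining with the upper bound yields $2 p_{2,m} = \pi - \pi/m$, giving $p_{2,m} = \pi/2 - \pi/(2m)$.
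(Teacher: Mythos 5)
Your proposal is correct, and it shares the paper's skeleton --- the even case is read off from Theorem~\ref{MainTh}, and the odd case is reduced via $P_2 = \pi\Delta_2$ and Theorem~\ref{SimplexTh} to evaluating $\inf_D \max\{\diam D,\ \pi - \alpha(D)\}$ --- but the decisive lower bound is argued by a genuinely different mechanism. The paper proves that every $2$-partition has a part of full diameter by a local contradiction: it picks a pair $v_i, v_j$ realizing the largest part diameter $d$, passes to the outward neighbours $v_k, v_l$, forces them into the other part, and exhibits there a pair of length at least $d + \frac{2\pi}{m}$. You argue globally: the diametral pairs $\{i,\ i + (m-1)/2 \bmod m\}$ form a Hamiltonian cycle of odd length $m$ (as $\gcd\bigl((m-1)/2, m\bigr) = 1$), an odd cycle has no proper $2$-coloring, hence some part contains a diametral pair and $\diam D \geqslant \pi - \frac{\pi}{m}$. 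You also make the upper bound explicit via the near-balanced arc partition, which the paper leaves implicit (every partition trivially satisfies $\diam D \leqslant \diam P_m$). Two remarks on what each route buys. Your parity argument needs no case analysis on whether $d + \frac{2\pi}{m}$ exceeds $\pi$, and your diameter value $\diam P_m = \pi - \frac{\pi}{m}$ is the correct one: the paper's proof carries $\pi - \frac{\pi}{2m}$ both for $\diam P_m$ and for the claimed bound on $\diam D$, which is a slip --- your partition attains $\diam D = \pi - \frac{\pi}{m} < \pi - \frac{\pi}{2m}$, so the paper's inequality must be read with $\pi - \frac{\pi}{m}$, and with that correction its neighbour argument does go through and yields the stated formula, which you also reach. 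Conversely, the paper's geometric, arc-based style of reasoning is the one that extends to partitions into three parts (it underlies the subsequent computation of $p_{3,m}$), whereas the odd-cycle $2$-coloring trick is specific to two parts.
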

\begin{proof}
The case of even $m$ immediately follows from Theorem~\ref{MainTh}. Let $m$ be an odd number. Let $u_{1}, u_{2}$ be the vertices of $P_{2}$ and $v_{1},...,v_{m}$ be  the vertices of $P_{m}$. Notice that $P_2 = \pi\Delta_{2}$. By Theorem~\ref{SimplexTh}:
\begin{equation*}
    d_{GH}(P_{2}, P_{m}) = \frac{1}{2}\inf_{D \in \mathcal{D}_{2}}\max\bigl\{\diam D,\/\pi - \alpha(D),\/\diam P_{m} - \pi\bigr\},
\end{equation*}
where $\mathcal{D}_{2}$ is the family of all possible partitions of $P_{m}$ into $2$ non-empty subsets. In this case we have $\alpha(D) = \frac{2\pi}{m}$ and $\diam P_{m} = \pi - \frac{\pi}{2m}$. Then
\begin{equation*}
d_{GH}(P_{2}, P_{m}) = \frac{1}{2}\inf_{D \in \mathcal{D}_{2}}\max\{\diam D,\/\pi - \frac{\pi}{2m}\}.
\end{equation*}
Let's show that for any $D \in \mathcal{D}_{2}$ it holds
\begin{equation*}
    \diam D \geqslant \pi - \frac{\pi}{2m}.
\end{equation*}
Assume that the reverse is true for the partition $D = \{X_{1}, X_{2}\}$, i.e. $d = \diam D < \pi - \frac{\pi}{2m}$. Without loss of generality, we may assume that $\diam X_{1} = d$. Then there are vertices $v_{i}, v_{j} \in P_{m}$ such that $v_{i}, v_{j} \in X_{1}$ and $d(v_{i}, v_ {j}) = d$. Consider the vertices $v_{k}, v_{l} \in P_{m}$ adjacent to $v_{i}$ and $v_{j}$, respectively, and not lying inside the smaller circular arc of $S^1$ connecting $v_{i}$ with $v_{j}$. The vertex $v_{k}$ cannot belong to $X_{1}$, because otherwise
\begin{equation*}
    d = \diam X_{1} \geqslant d(v_{k}, v_{j}) = d(v_{i}, v_{j}) + \frac{2\pi}{m} = d + \frac{2\pi}{m} > d.
\end{equation*}
Similarly, $v_{l} \in X_{2}$. Then
$$
d = \diam D \geqslant \diam X_{2} \geqslant d(v_{k}, v_{l}) \geqslant d(v_{i}, v_{j}) + \frac{2\pi}{m} = d + \frac{2\pi}{m} > d.
$$
Thus, the theorem is proved.
\end{proof}
\begin{Th}
Let $m \geqslant 3$ and $r$ is the remainder of dividing $m$ by $3$. Then
\begin{equation*}
    p_{3,m} = 
    \begin{cases}
    \frac{\pi}{3} - \frac{\pi}{m}, & \text{if}\ r = 0; \\
    \frac{\pi}{3} - \frac{r\pi}{3m}, & \text{if}\ r \neq 0.
    \end{cases}
\end{equation*}
\end{Th}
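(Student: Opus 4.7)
The case $r = 0$ follows immediately from Theorem~\ref{MainTh} with $n = 3$, so my plan concentrates on $r \in \{1, 2\}$. Since $P_3$ is the simplex $\tfrac{2\pi}{3}\Delta_3$, Theorem~\ref{SimplexTh} with $\lambda = \tfrac{2\pi}{3}$ gives
\begin{equation*}
    2 p_{3, m} = \inf_{D \in \mathcal{D}_3(P_m)} \max\bigl\{\diam D,\ \tfrac{2\pi}{3} - \alpha(D),\ \diam P_m - \tfrac{2\pi}{3}\bigr\}.
\end{equation*}
Writing $m = 3q + r$, the target equality becomes $2 p_{3, m} = q \cdot \tfrac{2\pi}{m}$, so the whole proof reduces to sandwiching the right-hand side by $q \cdot \tfrac{2\pi}{m}$.

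For the upper bound I would test the partition $D^*$ of $P_m$ into three cyclically consecutive arcs of sizes $(q+1, q, q)$ when $r = 1$ and $(q+1, q+1, q)$ when $r = 2$. Since adjacent vertices lie on both sides of some arc boundary, $\alpha(D^*) = \tfrac{2\pi}{m}$, and the longest arc forces $\diam D^* = q \cdot \tfrac{2\pi}{m}$. A short arithmetic check (using $\diam P_m \leqslant \pi$ and $m \geqslant 2r$) then shows that $\diam D^*$ dominates both $\tfrac{2\pi}{3} - \alpha(D^*)$ and $\diam P_m - \tfrac{2\pi}{3}$, hence $2 p_{3, m} \leqslant q \cdot \tfrac{2\pi}{m}$.

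For the lower bound, I want to show that every partition $D \in \mathcal{D}_3(P_m)$ satisfies $\max_i \diam X_i \geqslant q \cdot \tfrac{2\pi}{m}$. Because $P_m$ forms a cycle and the three parts are all non-empty, some pair of adjacent vertices belongs to different parts, so $\alpha(D) = \tfrac{2\pi}{m}$ in every case; this makes the middle entry $\tfrac{2\pi}{3} - \tfrac{2\pi}{m}$ strictly smaller than $q \cdot \tfrac{2\pi}{m}$, and the third entry is no larger either. By pigeonhole some $X_i$ satisfies $|X_i| \geqslant \lceil m/3 \rceil = q + 1$, which reduces the task to the key lemma: \emph{any $(q+1)$-subset $S \subseteq P_m$ satisfies $\diam S \geqslant q \cdot \tfrac{2\pi}{m}$}.

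I expect this lemma to be the main technical obstacle. I would prove it by contradiction: suppose the maximum cyclic index distance within $S$ is some $D \leqslant q - 1$ and fix $p, p' \in S$ realising it, identifying $P_m$ with $\mathbb{Z}/m\mathbb{Z}$ so that $p = 0$ and $p' = D$. For any other $s \in S$ on the longer arc $\{D+1, \ldots, m-1\}$, demanding that the cyclic distance from $s$ to both $p$ and $p'$ be at most $D$ forces simultaneously $s \geqslant m - D$ (from $\min(s, m-s) \leqslant D$) and $s \leqslant 2D$ (from $\min(s-D, m-s+D) \leqslant D$); this combines to $m \leqslant 3D \leqslant 3(q-1) < m$, a contradiction. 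Hence $S$ lies entirely inside the shorter arc $\{0, 1, \ldots, D\}$, which contains only $D + 1 \leqslant q$ vertices, contradicting $|S| = q + 1$. Therefore $D \geqslant q$, the lemma is proved, and the lower bound matches the upper bound, giving the claimed value of $p_{3, m}$.
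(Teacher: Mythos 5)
Your proposal is correct, and for the lower bound it takes a genuinely different route from the paper. The upper bound is essentially the same: you and the paper both plug $P_3=\frac{2\pi}{3}\Delta_3$ into Theorem~\ref{SimplexTh} and evaluate the same partition of $P_m$ into three blocks of consecutive vertices of sizes $(q+1,q,q)$, resp.\ $(q+1,q+1,q)$, with diameter $q\cdot\frac{2\pi}{m}$. For the lower bound, however, the paper argues structurally: assuming $\diam D<q\cdot\frac{2\pi}{m}$, it shows each part must consist of consecutive vertices (via a three-arc argument around a diameter-realizing pair) and then derives a contradiction by summing the three arc lengths against the three gaps of size $\frac{2\pi}{m}$. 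You instead use pigeonhole to find a part with at least $\lceil m/3\rceil=q+1$ vertices and prove the clean combinatorial lemma that any $q+1$ vertices of the $m$-cycle contain two at cyclic index distance at least $q$ (equivalently, a set of cyclic diameter at most $q-1$ sits inside an arc of only $q$ vertices), which your $\mathbb{Z}/m\mathbb{Z}$ computation establishes correctly since $3(q-1)<m$ when $r\neq 0$. Your version buys several things: it avoids the somewhat delicate claim that diameter-minimizing parts are arcs, it treats $r=1$ and $r=2$ uniformly, it isolates a reusable lemma that would generalize to $p_{n,m}$ for other non-divisible pairs, and because you check directly that $\diam D^*$ dominates both $\frac{2\pi}{3}-\alpha(D^*)$ and $\diam P_m-\frac{2\pi}{3}$ for all $m\geqslant 4$, you do not need the paper's separate appeal to Proposition~\ref{MemoliPrep} for $m=4$. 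The paper's argument, in exchange, is more geometric and makes the extremal configurations visible. One tiny point to polish when writing it up: in the step $\min(s-D,\,m-s+D)\leqslant D$ you should say explicitly that the branch $m-s+D\leqslant D$ is impossible because $s\leqslant m-1$, so the bound $s\leqslant 2D$ is the only option; with that sentence added the lemma's proof is complete.
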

\begin{proof}
The case of $r = 0$ immediately follows from Theorem~\ref{MainTh}. Let $r > 0$. Notice that $P_3 = \frac{2\pi}{3}\Delta_{3}$. By Theorem~\ref{SimplexTh},
\begin{equation*}
    d_{GH}(P_{3}, P_{m}) = \frac{1}{2}\inf_{D \in \mathcal{D}_{3}}\max\bigl\{\diam D,\/\frac{2\pi}{3} -\alpha(D),\/\diam P_{m} - \frac{2\pi}{3}\bigr\},
\end{equation*}
where $\mathcal{D}_{3}$ --- the family of all possible partitions of $P_{m}$ into $3$ non-empty subsets.
In this case we have $\alpha(D) = \frac{2\pi}{m}$ and $\diam P_{m} \leqslant \pi$. According to Proposition~\ref{MemoliPrep}, we have $p_{3,4} = \frac{\pi}{4}$. Now let $m \geqslant 5$. In this case we have
\begin{equation*}
    d_{GH}(P_{3}, P_{m}) = \frac{1}{2}\inf_{D \in \mathcal{D}_{3}}\max\{\diam D,\/\frac{2\pi}{3} - \frac{2\pi}{m}\}.
\end{equation*}
Put $q = [\frac{m}{3}]$. Assume that $r = 1$. Consider the following partition $D = \{X_{1}, X_{2}, X_{3}\}\/\colon$
\begin{equation*}
    X_{1} = \{v_{1}, v_{2}, ..., v_{q}\};
\end{equation*}
\begin{equation*}
    X_{2} = \{v_{q + 1}, v_{q + 2}, ..., v_{2q}\};
\end{equation*}
\begin{equation*}
    X_{3} = \{v_{2q + 1}, v_{2q + 2}, ..., v_{m}\}.
\end{equation*}
Then, $\diam D = (\frac{m - 1}{3})\frac{2\pi}{m} = \frac{2\pi}{3} - \frac{2\pi}{3m}$. Let us show that for any $D \in \mathcal{D}_{2}$ we have
\begin{equation*}
    \diam D \geqslant \frac{2\pi}{3} - \frac{2\pi}{3m}.
\end{equation*} \\
Assume that the reverse is true for the partition $D = \{X_{1}, X_{2}, X_{3}\}$, i.e. $d = \diam D < \frac{2\pi}{3} - \frac{2\pi}{3m}$. Without loss of generality, we assume that the set $X_{1}$ contains more than one point. Then there are vertices $v_{i}, v_{j} \in P_{m}$ such that $v_{i}, v_{j} \in X_{1}$ and $d(v_{i}, v_ {j}) = \diam X_{1}$. Let us show that every vertex $v_{k} \in X_{1}$ must lie inside the smaller arc of the circle $S^1$ connecting $v_{i}$ and $v_{j}$. Suppose that $v_{k} \in X_{1}$ lies outside this arc. Then the circle is divided into 3 arcs $v_{i}v_{j}, v_{j}v_{k}, v_{k}v_{i}$. The length of each of them must not be greater than $\diam X_{1} \leqslant d$. Then
\begin{equation*}
    2\pi = d(v_{i}, v_{j}) + d(v_{j}, v_{k}) + d(v_{k}, v_{i}) \leqslant 3d < 2\pi - \frac{2\pi}{m} < 2\pi.
\end{equation*}
Thus, each of the sets $X_{1}$, $X_{2}$ and $X_{3}$ is a set of consecutive vertices of $P_{m}$, and diameter of each $X_{i}$ is at most $\frac{2\pi}{ 3} - \frac{2\pi}{3m} - \frac{2\pi}{m}$. Then
\begin{equation*}
    2\pi - \frac{6\pi}{m} = \diam X_{1} + \diam X_{2} + \diam X_{3} \leqslant 2\pi - \frac{6\pi}{m} - \frac{2\pi}{m} < 2\pi - \frac{6\pi}{m}.
\end{equation*}
Thus, $d_{GH}(P_{3}, P_{m}) = \frac{\pi}{3} - \frac{\pi}{3m}$. Assume that $r = 2$. Consider the following partition $D = \{X_{1}, X_{2}, X_{3}\}\/\colon$
\begin{equation*}
    X_{1} = \{v_{1}, v_{2}, ..., v_{q}\};
\end{equation*}
\begin{equation*}
    X_{2} = \{v_{q + 1}, v_{q + 2}, ..., v_{2q + 1}\};
\end{equation*}
\begin{equation*}
    X_{3} = \{v_{2q + 2}, v_{2[\frac{m}{3}] + 2}, ..., v_{m}\}.
\end{equation*}
Then $\diam D = (\frac{m - 2}{3})\frac{2\pi}{m} = \frac{2\pi}{3} - \frac{4\pi}{3m} $. Repeating the arguments for the case $r = 1$, we get that for any $D \in \mathcal{D}_{3}$ we have
\begin{equation*}
    \diam D \geqslant \frac{2\pi}{3} - \frac{4\pi}{3m}.
\end{equation*}
Thus, $d_{GH}(P_{3}, P_{m}) = \frac{\pi}{3} - \frac{2\pi}{3m}$. It completes the proof.
\end{proof}

\end{document}